\documentclass[12pt]{article}
\pdfoutput=1
\RequirePackage[OT1]{fontenc}
\RequirePackage{amsthm,amsmath,amssymb}
\usepackage{libertine}
\usepackage[libertine]{newtxmath}
\RequirePackage{natbib}
\RequirePackage[colorlinks,citecolor=blue,urlcolor=blue]{hyperref}
\usepackage{authblk}
\usepackage{bbm}
\usepackage{multirow}
\usepackage{paralist}
\usepackage[left=1.375in, right=1.375in]{geometry}
\usepackage{graphicx}
\usepackage{mathtools}
\usepackage[bf]{caption}
\usepackage{rotating}
\mathtoolsset{mathic=true}
\usepackage{setspace}
\usepackage[inline]{enumitem}
\usepackage{booktabs}

\newtheorem{Theorem}{Theorem}
\newtheorem{Lemma}{Lemma}
\newtheorem{Proposition}{Proposition}
\theoremstyle{definition}
\newtheorem{Definition}{Definition}

\begin{document}

\title{\textbf{Least favorability of the uniform distribution for tests of the concavity of a distribution function}}
\author{Brendan K.\ Beare\thanks{I thank Zheng Fang for his helpful comments on an early draft of this note.}}
\affil{School of Economics, University of Sydney}
\maketitle
\begin{center}
Accepted for publication in \emph{Stat}.
\end{center}
\medskip
\begin{abstract}
A test of the concavity of a distribution function with support contained in the unit interval may be based on a statistic constructed from the \(L^p\)-norm of the difference between an empirical distribution function and its least concave majorant. It is shown here that the uniform distribution is least favorable for such a test, in the sense that the limiting distribution of the statistic obtained under uniformity stochastically dominates the limiting distribution obtained under any other concave distribution function.
\end{abstract}
	
	\newpage
	
\section{Introduction}
Denote by \(\mathcal F\) the collection of distribution functions \(F:[0,\infty)\to[0,1]\) which are continuous, assign zero mass at zero, and have support contained in the unit interval, so that \(F(0)=0\) and \(F(x)=1\) for \(x\geq1\).
Let \(X_1,\dots,X_n\) be an independent and identically distributed collection of random variables with common distribution function \(F\in\mathcal F\) and empirical distribution function \(F_n:[0,\infty)\to[0,1]\). We are concerned with tests of the null hypothesis that \(F\) is concave, with test statistics based upon the distance between \(F_n\) and its least concave majorant (LCM). Specifically, we consider statistics of the form
\begin{equation}
	S_{n,p}=S_{n,p}(X_1,\dots,X_n)=\sqrt{n}\Vert\mathcal MF_n-F_n\Vert_p,
\end{equation}
where \(\Vert\cdot\Vert_p\) is the usual \(L^p\)-norm with \(p\in[1,\infty]\), and \(\mathcal MF_n\) is the LCM of \(F_n\), defined carefully in the following section. If \(F\) is concave, then the statistic \(S_{n,p}\) has a weak limit (i.e., limiting distribution) as \(n\to\infty\). The contribution of this note is to show that, among all the weak limits of \(S_{n,p}\) corresponding to different concave choices of \(F\in\mathcal F\), there is one weak limit which first-order stochastically dominates all of the others, and that weak limit is obtained when \(F\) is the uniform distribution (on the unit interval). In this sense, the uniform distribution is least favorable under the null.

The problem we study has been considered in earlier work. \citet{KL2004,KL2008} showed that the uniform distribution is least favorable when \(p=\infty\), even for fixed \(n\), but wrote of \(S_{n,p}\) for the case \(p\in[1,\infty)\) that ``its limiting distribution under uniformity can be obtained, but we cannot prove that the uniform distribution is least favorable'' \citep[p.\ 5]{KL2004}. Instead, they showed that the uniform distribution is least favorable for the modified quantities
\begin{align}
	R_{n,p}&=\sqrt{n}\left(\int_0^1(\mathcal MF_n(u)-F_n(u))^p\mathrm{d}F(u)\right)^{1/p},\\ T_{n,p}&=\sqrt{n}\left(\int_0^1(\mathcal MF_n(u)-F_n(u))^p\mathrm{d}F_n(u)\right)^{1/p},
\end{align}
which are based on weighted, rather than unweighted, integrals. They achieved this by showing that the quantities \(S_{n,\infty}\), \(R_{n,p}\) and \(T_{n,p}\) are guaranteed to weakly increase if \(F\) is concave and our observations \(X_1,\dots,X_n\) are replaced with the uniformly distributed random variables \(F(X_1),\dots,F(X_n)\). That is,
\begin{equation}
	S_{n,\infty}(X_1,\dots,X_n)\leq S_{n,\infty}(F(X_1),\dots,F(X_n)),
\end{equation}
and similarly for \(R_{n,p}\) and \(T_{n,p}\). This is Proposition 3.1 of \citet{KL2008}. Closely related results were obtained by \citet{C2002}, and by \citet{D2003} in a monotone regression context.

As noted by \citet[p.\ 361]{KL2008}, for \(p\in[1,\infty)\) it may not be true that the statistic \(S_{n,p}\) must weakly increase if \(F\) is concave and we replace our observations \(X_1,\dots,X_n\) with \(F(X_1),\dots,F(X_n)\). For instance, for a sample of size \(n=2\) with observations \(X_1=0.25\) and \(X_2=1\), it is easy to verify that \(S_{2,2}(0.25,1)=0.37\) (rounding to two decimal places). If \(F(u)=\sqrt{u}\) and we instead use as our observations \(F(X_1)=0.5\) and \(F(X_2)=1\), then we compute \(S_{2,2}(0.5,1)=0.29\), which is smaller than the statistic computed from the original observations. This example shows that it is possible that \(S_{n,2}(X_1,\dots,X_n)>S_{n,2}(F(X_1),\dots,F(X_n))\) when \(F\) is concave. The main result of this note shows that it is nevertheless always the case that, for any \(p\in[1,\infty]\), the weak limit of \(S_{n,p}(X_1,\dots,X_n)\) is first-order stochastically dominated by the weak limit of \(S_{n,p}(F(X_1),\dots,F(X_n))\) when \(F\) is concave.

The least favorability of the uniform distribution may be unsurprising. Uniformity is the unique case where \(F\) is linear on the unit interval, and, at an intuitive level, we may regard the linear functions to be the subset of the concave functions which are as close as possible to violating concavity. However, this intuition has proved to be misleading in the closely related context of testing whether an ordinal dominance curve (ODC), also called a receiver operating characteristic curve, is concave. \citet{CT2005} proposed a test of the concavity of an ODC based on a statistic constructed from the \(L^p\)-norm of the difference between an empirical analogue to the ODC, and its LCM. \citet{BM2015} proved the following surprising fact about this test: the uniform distribution is least favorable if \(p\in[1,2]\), but is not least favorable if \(p\in(2,\infty]\). In fact, for \(p\in(2,\infty]\), one can find sequences of concave ODCs along which the weak limit of the test statistic diverges to infinity. Such sequences exhibit increasingly steep affine segments near zero. The main result of this note shows that this surprising aspect of testing the concavity of an ODC does not manifest in the simpler context of testing the concavity of a distribution function.

The motivation for writing this note is a recent technical report by \citet{EKW2020} in which statistical tests for p-hacking are developed and applied to academic publication data. One of their procedures involves using the statistic \(S_{n,p}\) to test whether the distribution of reported p-values is concave, which ought to be the case in the absence of p-hacking. The validity of their choice of critical value relies on the least favorability of the uniform distribution, for which they appeal to this note, and to \citet{KL2008}.

Before proceeding further we introduce some additional notation and terminology. We denote by \(\mathbf R^+\) the nonnegative half-line \([0,\infty)\). Given a nonempty convex set \(I\subseteq\mathbf R^+\), we denote by \(\ell^\infty(I)\) the collection of all uniformly bounded, real-valued functions on \(I\), and we denote by \(\ell^\infty_I\) the collection of all uniformly bounded, real-valued functions whose domain is any convex set \(J\) with \(I\subseteq J\subseteq\mathbf R^+\). The two collections \(\ell^\infty(I)\) and \(\ell^\infty_I\) are the same when \(I=\mathbf R^+\); we denote this collection by \(\ell^\infty\), and regard it as a real Banach space equipped with the uniform norm. We denote by \(C_0\) the subspace of \(\ell^\infty\) consisting of all continuous functions in \(\ell^\infty\) vanishing at infinity. We denote by \(\rightsquigarrow\) weak convergence in \(\ell^\infty\) in the sense of Hoffman-J\o rgensen. We denote by \(\to_\mathrm{d}\) the convergence in distribution  of a sequence of real-valued random variables. We refer to a centered Gaussian process on \([0,1]\) as a Wiener process if it has covariance function \((u,v)\mapsto u\wedge v\), or as a Brownian bridge if it has covariance function \((u,v)\mapsto u\wedge v-uv\).

\section{Results}

Since the least favorability of the uniform distribution when \(p=\infty\) was shown already by \citet{KL2004,KL2008}, we assume in what follows that \(p\in[1,\infty)\). The following definition is adapted from Definition 2.1 of \citet{BF2017}.
\begin{Definition}
	Given a nonempty convex set \(I\subseteq\mathbf R^+\), the LCM over \(I\) is the mapping \(\mathcal M_I:\ell^\infty_I\to\ell^\infty(I)\) that transforms each \(\theta\in\ell^\infty_I\) to the function
	\begin{equation}
		\mathcal M_I\theta(x)=\inf\{g(x):g\in\ell^\infty(I),\,g\text{ is concave, and }\theta\leq g\text{ on }I\},\quad x\in I.
	\end{equation}
	We write \(\mathcal M\) as shorthand for \(\mathcal M_{\mathbf R^+}\), and refer to \(\mathcal M\) as the LCM operator.
\end{Definition}
Critical to our analysis is the fact that the LCM operator satisfies a property called Hadamard directional differentiability. We define this property as in Definition 2.2 of \citet{BF2017}.
\begin{Definition}
	Let \(\mathbf D\) and \(\mathbf E\) be real Banach spaces. A map \(\phi:\mathbf D\to\mathbf E\) is said to be Hadamard directionally differentiable at \(\theta\in\mathbf D\) tangentially to a set \(\mathbf D_0\subset\mathbf D\) if there is a map \(\phi'_\theta:\mathbf D_0\to\mathbf E\) such that
	\begin{equation}
		\left\Vert\frac{\phi(\theta+t_nh_n)-\phi(\theta)}{t_n}-\phi'_\theta(h)\right\Vert_{\mathbf E}\to0
	\end{equation}
	for all \(h\in\mathbf D_0\) and all \(h_1,h_2,\dots\in\mathbf D\) and \(t_1,t_2,\dots\in(0,\infty)\) such that \(t_n\downarrow0\) and \(\Vert h_n-h\Vert_{\mathbf D}\to0\). The map \(\phi'_\theta\) is called the Hadamard directional derivative of \(\phi\) at \(\theta\) tangentially to \(\mathbf D_0\).
\end{Definition}
Hadamard directional differentiability differs from the usual notion of Hadamard differentiability in that the approximating map \(\phi'_\theta\) is not required to be linear. The following result on the Hadamard directional differentiability of the LCM operator is Proposition 2.1 of \citet{BF2017}. A closely related result was established previously by \citet[Lemma 3.2]{BM2015}. See \citet{BS2019} for a graphical illustration of the Hadamard directional derivative of the LCM operator, and an alternative representation of it in terms of a three-dimensional contact set.
\begin{Lemma}\label{HadamardLem}
	The LCM operator \(\mathcal M:\ell^\infty\to\ell^\infty\) is Hadamard directionally differentiable at any concave \(\theta\in\ell^\infty\) tangentially to \(C_0\). Its Hadamard directional derivative \(\mathcal M'_\theta:C_0\to\ell^\infty\) is uniquely determined as follows: for any \(h\in C_0\) and \(x\in\mathbf R^+\), we have \(\mathcal M'_\theta h(x)=\mathcal M_{\{x\}\cup I_{\theta,x}}h(x)\), where \(I_{\theta,x}\) is the union of all open intervals \(I\subset\mathbf R^+\) such that
	\begin{enumerate*}
		\item[(1)] \(x\in I\), and
		\item[(2)] \(\theta\) is affine on \(I\).
	\end{enumerate*}
\end{Lemma}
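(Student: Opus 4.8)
The plan is to reduce the claim to the classical ``chord'' representation of the least concave majorant and then pass to the limit in a localised way. For any bounded $\psi$ and $x\in\mathbf R^+$ one has the dual formula
\[
\mathcal M\psi(x)=\sup\Bigl\{\tfrac{(b-x)\psi(a)+(x-a)\psi(b)}{b-a}:a\le x\le b\Bigr\},
\]
where the degenerate chord $a=b=x$ contributes $\psi(x)$; this is one-dimensional Fenchel/linear-programming duality, which I would verify separately. Write $L_g(a,b,x)$ for the chord value of a function $g$, and, for the concave $\theta$, put $D_\theta(a,b,x):=\theta(x)-L_\theta(a,b,x)\ge0$. Concavity gives $\mathcal M\theta=\theta$, and applying the representation to $\psi_n:=\theta+t_nh_n$ yields the identity
\[
g_n(x):=\frac{\mathcal M(\theta+t_nh_n)(x)-\theta(x)}{t_n}=\sup_{a\le x\le b}\bigl[L_{h_n}(a,b,x)-t_n^{-1}D_\theta(a,b,x)\bigr].
\]
As $t_n\downarrow0$ the penalty $t_n^{-1}D_\theta$ annihilates every chord with $D_\theta>0$, while for $a<x<b$ one has $D_\theta(a,b,x)=0$ exactly when $\theta$ is affine on $[a,b]$, hence (choosing $a,b$ strictly inside) exactly when $[a,b]\subseteq I_{\theta,x}$. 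Taking the supremum of $L_h$ over such chords together with the degenerate one returns $\mathcal M_{\{x\}\cup I_{\theta,x}}h(x)=:g(x)$, the claimed derivative; since Hadamard directional derivatives are automatically unique, it remains to prove $\Vert g_n-g\Vert_\infty\to0$.

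\emph{The lower bound} is the easy half. Given $\varepsilon>0$ and $x$, choose a chord $[a,b]\subseteq I_{\theta,x}$ (or the degenerate chord if $I_{\theta,x}=\varnothing$) with $L_h(a,b,x)>g(x)-\varepsilon/2$; it satisfies $D_\theta(a,b,x)=0$, so the identity gives $g_n(x)\ge L_{h_n}(a,b,x)\ge L_h(a,b,x)-\Vert h_n-h\Vert_\infty>g(x)-\varepsilon$ as soon as $\Vert h_n-h\Vert_\infty<\varepsilon/2$, uniformly in $x$. (The crude subadditivity estimate $g_n\le\mathcal Mh_n\to\mathcal Mh$ is only one-sided, and the \emph{global} envelope $\mathcal Mh$ is in general strictly above $g$, which is exactly why a localised argument is needed.)

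\emph{The upper bound} is the main obstacle: one must control, uniformly in $x$, chords that are nearly affine for $\theta$ --- those with $D_\theta(a,b,x)$ small but positive --- whose penalty does not die quickly, yet whose $L_h$-value must not overshoot $g(x)$ by a fixed margin. I would prove that for every $\varepsilon>0$ there is $\eta>0$ such that $a\le x\le b$ and $D_\theta(a,b,x)<\eta$ force $L_h(a,b,x)\le g(x)+\varepsilon$, uniformly in $x$. If this failed, pick $\varepsilon$ and sequences $x_k,a_k,b_k$ with $a_k\le x_k\le b_k$, $D_\theta(a_k,b_k,x_k)\to0$ and $L_h(a_k,b_k,x_k)>g(x_k)+\varepsilon$; after a one-point compactification of $\mathbf R^+$ (and using $h\in C_0$ to control behaviour at infinity) pass to limits $x_k\to x_\ast$, $a_k\to a_\ast$, $b_k\to b_\ast$. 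Continuity of $\theta$ on the interior forces $D_\theta(a_\ast,b_\ast,x_\ast)=0$, so $\theta$ is affine on $[a_\ast,b_\ast]$ and $(a_\ast,b_\ast)\subseteq I_{\theta,x_\ast}$; approximating $[a_\ast,b_\ast]$ from within and using continuity of $h$ gives $L_h(a_\ast,b_\ast,x_\ast)\le\mathcal M_{I_{\theta,x_\ast}}h(x_\ast)=g(x_\ast)$, with the evident modifications when $x_\ast\in\{a_\ast,b_\ast\}$. On the other hand $g$ is lower semicontinuous, since any $y\in I_{\theta,x_\ast}$ lies in $I_{\theta,x_k}$ for large $k$, whence $\liminf_kg(x_k)\ge\sup_K\lim_k\mathcal M_Kh(x_k)=g(x_\ast)$, the supremum over compact $K\subseteq I_{\theta,x_\ast}$. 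Combining, $g(x_\ast)\ge L_h(a_\ast,b_\ast,x_\ast)=\lim_kL_h(a_k,b_k,x_k)\ge\varepsilon+\limsup_kg(x_k)\ge\varepsilon+g(x_\ast)$, a contradiction. With such an $\eta$, split the supremum defining $g_n(x)$: chords with $D_\theta<\eta$ contribute at most $g(x)+\varepsilon+\Vert h_n-h\Vert_\infty$, and chords with $D_\theta\ge\eta$ at most $\Vert h_n\Vert_\infty-\eta/t_n$, which tends to $-\infty$ uniformly (as $\Vert h_n\Vert_\infty$ is bounded and $g$ bounded below). Hence $\sup_x(g_n(x)-g(x))\le2\varepsilon$ for all large $n$, which with the lower bound gives $\Vert g_n-g\Vert_\infty\to0$; and $g\in\ell^\infty$ because it is sandwiched between $-\Vert h\Vert_\infty$ and $\Vert h\Vert_\infty$.

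The routine items I would defer are the chord representation and its handling at infinity, the equality case of concavity linking $D_\theta(a,b,x)=0$ to affinity of $\theta$ on $[a,b]$, and the continuity of $(a,b)\mapsto L_h(a,b,x)$ and of the LCM on a fixed compact interval. The genuine work sits in the upper bound: the uniform-in-$x$ suppression of nearly-affine chords, resting on the compactness-plus-semicontinuity argument above and on the mild extra care forced by the non-compactness of $\mathbf R^+$.
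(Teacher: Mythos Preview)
The paper does not actually prove this lemma; its proof reads in full ``This is Proposition 2.1 of \citet{BF2017}.'' Your proposal therefore goes well beyond what the paper provides, supplying a direct argument via the chord (dual) representation of the LCM combined with a compactness argument for the uniform upper bound.

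Your strategy is essentially sound and is close in spirit to how such results are proved in the cited literature. The identity
\[
g_n(x)=\sup_{a\le x\le b}\bigl[L_{h_n}(a,b,x)-t_n^{-1}D_\theta(a,b,x)\bigr]
\]
cleanly isolates the role of the concavity defect as a penalty, and the equality case of concavity correctly identifies the surviving chords as those lying inside $\{x\}\cup I_{\theta,x}$, giving the claimed derivative. You are also right that the genuine work is the uniform upper bound, and your compactness-plus-lower-semicontinuity argument for it is the standard route; the observation that $x_k\in I_{\theta,x_\ast}$ forces $I_{\theta,x_k}=I_{\theta,x_\ast}$, so that $g$ is continuous (not merely lsc) at points where $I_{\theta,\cdot}$ is nonempty, makes the semicontinuity step clean.

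A few of the deferred edge cases deserve slightly more care than ``evident modifications'' suggests. First, a concave bounded $\theta$ on $[0,\infty)$ need not be continuous at $0$: for instance $\theta(0)=0$, $\theta\equiv1$ on $(0,\infty)$ is concave in $\ell^\infty$, so the passage $D_\theta(a_k,b_k,x_k)\to D_\theta(a_\ast,b_\ast,x_\ast)$ requires a separate (easy) treatment when $a_\ast=0$. Second, in the one-point-compactification limit, the cases $b_\ast=\infty$ and $x_\ast=\infty$ each need a short explicit computation; both go through using $h\in C_0$ together with the fact that a bounded concave function on $[0,\infty)$ is nondecreasing, which forces $\theta$ to be constant (hence affine) on a half-line once $D_\theta\to0$ along such sequences. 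Third, the chord representation $\mathcal M\psi(x)=\sup_{a\le x\le b}L_\psi(a,b,x)$ on the half-line, while standard, is not entirely trivial for merely bounded $\psi$ and should be stated carefully. None of these is a gap in the argument; they are exactly the ``routine items'' you flag.
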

With the Hadamard directional derivative of the LCM operator in hand, it is straightforward to deduce the weak limit of our test statistic \(S_{n,p}\) when \(F\) is concave by applying the functional Delta method. This has been done by \citet{F2019}. Specifically, commencing from the weak convergence \(\sqrt{n}(F_n-F)\rightsquigarrow\mathbb B_F=\mathbb B\circ F\) guaranteed by Donsker's theorem, with \(\mathbb B\) a Brownian bridge, we may apply the functional Delta method to obtain \(\sqrt{n}(\mathcal MF_n-F_n)\rightsquigarrow\mathcal M'_F\mathbb B_F-\mathbb B_F\). That the functional Delta method may be applied when the relevant Hadamard directional derivative is nonlinear, as is generally the case with the LCM operator, was shown by \citet{S1991}. An application of the continuous mapping theorem now gives the following result, which is immediate from Lemma 3.1 of \citet{F2019}.
\begin{Lemma}\label{FangLem}
	For any concave \(F\in\mathcal F\), we have \(S_{n,p}\to_\mathrm{d}\Vert\mathcal M'_F\mathbb B_F-\mathbb B_F\Vert_p\), where \(\mathcal M'_F\) is given by Lemma \ref{HadamardLem}.
\end{Lemma}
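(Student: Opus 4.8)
The statement follows by combining the functional weak convergence $\sqrt{n}(\mathcal MF_n-F_n)\rightsquigarrow\mathcal M'_F\mathbb B_F-\mathbb B_F$ sketched in the paragraph preceding the lemma with the continuous mapping theorem, so the plan is to (i) confirm the hypotheses behind that functional convergence and (ii) check that the $L^p$-norm functional is continuous on the relevant space.

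For step (i), I would start from Donsker's theorem, which gives $\sqrt{n}(F_n-F)\rightsquigarrow\mathbb B_F=\mathbb B\circ F$ in $\ell^\infty$. Because $F$ is continuous with $F(0)=0$ and $F(x)=1$ for $x\geq1$, the sample paths of $\mathbb B_F$ are continuous on $\mathbf R^+$ and identically zero on $[1,\infty)$, so $\mathbb B_F$ is concentrated on the separable subspace $C_0$, which is exactly the tangent set appearing in Lemma \ref{HadamardLem}. Next I would consider the map $\psi:\ell^\infty\to\ell^\infty$ given by $\psi(\theta)=\mathcal M\theta-\theta$. Since the identity map is linear and therefore Hadamard directionally differentiable with derivative equal to itself, and since a sum of Hadamard directionally differentiable maps is again one with derivative the sum, Lemma \ref{HadamardLem} shows that $\psi$ is Hadamard directionally differentiable at the concave $F$ tangentially to $C_0$, with $\psi'_F(h)=\mathcal M'_Fh-h$. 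The functional Delta method valid for nonlinear Hadamard directional derivatives, established by \citet{S1991} and also available through \citet{F2019}, applied to $\sqrt{n}(F_n-F)\rightsquigarrow\mathbb B_F$ and using that $\psi(F)=\mathcal MF-F=0$ because $F$ is concave, then yields
\[
\sqrt{n}(\mathcal MF_n-F_n)=\sqrt{n}\,\psi(F_n)\rightsquigarrow\psi'_F(\mathbb B_F)=\mathcal M'_F\mathbb B_F-\mathbb B_F
\]
in $\ell^\infty$.

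For step (ii), I would note that both $\mathcal MF_n$ and $F_n$ equal $1$ on $[1,\infty)$, so $\mathcal MF_n-F_n$ is supported on $[0,1]$ and $\Vert\mathcal MF_n-F_n\Vert_p$ coincides with $(\int_0^1|\mathcal MF_n(u)-F_n(u)|^p\,\mathrm{d}u)^{1/p}$. The functional $g\mapsto(\int_0^1|g(u)|^p\,\mathrm{d}u)^{1/p}$ is continuous from $\ell^\infty$ with its uniform norm to $\mathbf R$: if $\Vert g_n-g\Vert_\infty\to0$ then $|g_n|^p\to|g|^p$ pointwise on $[0,1]$ and is uniformly bounded, so dominated convergence gives $\int_0^1|g_n|^p\,\mathrm{d}u\to\int_0^1|g|^p\,\mathrm{d}u$. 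Since $S_{n,p}=\sqrt{n}\Vert\mathcal MF_n-F_n\Vert_p$ is the image of $\sqrt{n}(\mathcal MF_n-F_n)$ under this continuous map, the continuous mapping theorem gives $S_{n,p}\to_\mathrm{d}\Vert\mathcal M'_F\mathbb B_F-\mathbb B_F\Vert_p$. Measurability of $S_{n,p}$, which is needed for $\to_\mathrm{d}$ to make sense, is immediate: $F_n$ is a step function determined by the order statistics, so $\mathcal MF_n$ and hence $S_{n,p}$ are Borel-measurable functions of the sample.

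I do not expect any single deep obstacle here, since Lemma \ref{HadamardLem} and the cited Delta method carry the substantive weight; the points requiring a little care are the verification that the Brownian bridge limit $\mathbb B_F$ is concentrated on the tangent cone $C_0$ at which $\mathcal M'_F$ is defined, and the bundling of the two terms $\mathcal MF_n$ and $F_n$ into a single Hadamard directionally differentiable functional so that the Delta method applies directly rather than to each piece in isolation.
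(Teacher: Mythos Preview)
Your proposal is correct and follows essentially the same approach as the paper: Donsker's theorem, the functional Delta method for Hadamard directionally differentiable maps applied to $\theta\mapsto\mathcal M\theta-\theta$, and then the continuous mapping theorem with the $L^p$-norm. The paper's own proof simply refers back to the sketch paragraph following Lemma~\ref{HadamardLem} and to Lemma~3.1 of \citet{F2019}; you have merely filled in the routine verifications (that $\mathbb B_F$ is concentrated on $C_0$, that the $L^p$-norm over $[0,1]$ is uniformly continuous, and measurability of $S_{n,p}$) that the paper leaves implicit.
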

In order to demonstrate the least favorability of the uniform distribution, it will be helpful to re-express the weak limit in Lemma \ref{FangLem} in terms of a mutually independent collection of Wiener processes. This will require some additional notation. Given a concave \(F\in\mathcal F\) and \(x\in(0,\infty)\), the set \(I_{F,x}\) defined in Lemma \ref{HadamardLem} is an open interval if \(F\) is affine in a neighborhood of \(x\), and is empty otherwise. Consider the collection \(\mathcal K\) of all such open intervals as we vary \(x\) over \((0,\bar{x})\), where \(\bar{x}=\inf\{x>0:F(x)=1\}\). The collection \(\mathcal K\) is nonempty if \(F\) is concave but not strictly concave on \((0,\bar{x})\), or empty if \(F\) is strictly concave on \((0,\bar{x})\). Any two open intervals in \(\mathcal K\) are disjoint by construction, so \(\mathcal K\) is finite or countable. If \(\mathcal K\) is nonempty then we write the intervals in \(\mathcal K\) as \((a_k,b_k)\) with \(k\) ranging over \(K\), a finite or countable index set. For each \(k\in K\), we define \(d_k=b_k-a_k\) (\(d\) for depth) and \(h_k=F(b_k)-F(a_k)\) (\(h\) for height). Note that if \(F\) is the uniform distribution then \(\mathcal K\) has a single element: the open unit interval. Given a function \(\theta\in\ell^\infty([0,1])\), we define \(\mathcal D\theta=\mathcal M_{[0,1]}\theta-\theta\).
\begin{Proposition}\label{SnpWeakLim}
	For any concave \(F\in\mathcal F\), if \(\mathcal K\) is nonempty, then the weak limit of \(S_{n,p}\) given in Lemma \ref{FangLem} satisfies
	\begin{equation}\label{prop1eq}
		\Vert\mathcal M_F'\mathbb B_F-\mathbb B_F\Vert_p=\left(\sum_{k\in K}d_kh_k^{p/2}\Vert\mathcal D \mathbb W_k\Vert^p_p\right)^{1/p},
	\end{equation}
	where \(\{\mathbb W_k:k\in K\}\) is a mutually independent collection of Wiener processes. In particular, if \(F\) is the uniform distribution on \([0,1]\), then \(\Vert\mathcal M_F'\mathbb B_F-\mathbb B_F\Vert_p=\Vert\mathcal D\mathbb W\Vert_p\), where \(\mathbb W\) is a Wiener process. If instead \(\mathcal K\) is empty, then \(\Vert\mathcal M_F'\mathbb B_F-\mathbb B_F\Vert_p=0\).
\end{Proposition}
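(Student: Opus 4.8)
The plan is to use Lemma~\ref{HadamardLem} to reduce the computation to the affine pieces of $F$ one at a time, to exploit the equivariance of the LCM operator under affine reparametrization of its domain and under addition of affine functions to its argument, and then to recognize the resulting processes as independent, rescaled Wiener processes.

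First I would realize the Brownian bridge as $\mathbb{B}=\mathbb{W}-\mathrm{id}\cdot\mathbb{W}(1)$ for a Wiener process $\mathbb{W}$, so that $\mathbb{B}_F=\mathbb{W}\circ F-F\cdot\mathbb{W}(1)$. Since $\mathbb{B}_F\in C_0$ almost surely, Lemma~\ref{HadamardLem} gives $\mathcal{M}_F'\mathbb{B}_F(x)=\mathcal{M}_{\{x\}\cup I_{F,x}}\mathbb{B}_F(x)$ for every $x\in\mathbf{R}^+$. I would then check that $\mathcal{M}_F'\mathbb{B}_F-\mathbb{B}_F$ vanishes Lebesgue-almost everywhere outside $\bigcup_{k\in K}(a_k,b_k)$: if $x\in(0,\bar x)$ lies in no affine piece then $I_{F,x}$ is empty and the LCM over the singleton $\{x\}$ returns $\mathbb{B}_F(x)$, so the difference is zero; and if $x>\bar x$ then $F\equiv1$ on $\{x\}\cup I_{F,x}=(\bar x,\infty)$, so both $\mathbb{B}_F$ and its LCM vanish there. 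Hence $\|\mathcal{M}_F'\mathbb{B}_F-\mathbb{B}_F\|_p^p=\sum_{k\in K}\int_{a_k}^{b_k}|\mathcal{M}_F'\mathbb{B}_F(x)-\mathbb{B}_F(x)|^p\,\mathrm{d}x$.

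Fixing $k\in K$, I would first note that $h_k>0$: a distribution function that is affine with zero slope on a subinterval of $(0,\bar x)$ would be constant there, and concavity together with monotonicity would then force it to equal $1$ on that subinterval, contradicting the definition of $\bar x$. Consequently $F$ carries $(a_k,b_k)$ increasingly onto $(F(a_k),F(a_k)+h_k)$, and the images attached to distinct indices have disjoint interiors. For $x\in(a_k,b_k)$ we have $\{x\}\cup I_{F,x}=(a_k,b_k)$; writing $x=a_k+d_ku$ with $u\in(0,1)$, and using the equivariance of $\mathcal{M}$ under the increasing affine bijection $u\mapsto a_k+d_ku$ together with its invariance under adding the affine function $u\mapsto\mathbb{W}(F(a_k))-(F(a_k)+h_ku)\mathbb{W}(1)$ (and the fact that the LCM over an open interval agrees on that interval with the LCM over its closure, for continuous arguments), I obtain
\begin{equation*}
	\mathcal{M}_F'\mathbb{B}_F(a_k+d_ku)-\mathbb{B}_F(a_k+d_ku)=\mathcal{D}V_k(u),\qquad V_k(u):=\mathbb{W}(F(a_k)+h_ku)-\mathbb{W}(F(a_k)),
\end{equation*}
so that $\int_{a_k}^{b_k}|\mathcal{M}_F'\mathbb{B}_F(x)-\mathbb{B}_F(x)|^p\,\mathrm{d}x=d_k\|\mathcal{D}V_k\|_p^p$ by a change of variables.

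It remains to identify the $V_k$. By Brownian scaling $\mathbb{W}_k:=h_k^{-1/2}V_k$ is a standard Wiener process on $[0,1]$, so positive homogeneity of $\mathcal{D}$ gives $\|\mathcal{D}V_k\|_p^p=h_k^{p/2}\|\mathcal{D}\mathbb{W}_k\|_p^p$; and because the intervals $[F(a_k),F(a_k)+h_k]$ have disjoint interiors, the increments of $\mathbb{W}$ over them---hence the processes $V_k$, hence the $\mathbb{W}_k$---are mutually independent. Summing over $k$ (the series converges almost surely, since $\sum_k d_k\le\bar x$ and $\|\mathcal{D}V_k\|_\infty\le4\|\mathbb{W}\|_\infty$ for each $k$) and taking $p$-th roots yields \eqref{prop1eq}. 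The uniform case is the instance $K=\{1\}$, $d_1=h_1=1$, and the case of empty $\mathcal{K}$ is immediate from the vanishing established above. I expect the main obstacle to be the bookkeeping in the localization step---extracting cleanly from Lemma~\ref{HadamardLem} that only $\mathbb{B}_F$ restricted to $(a_k,b_k)$ enters, and that nothing outside the affine pieces contributes---after which the probabilistic content is the routine independence and scaling of Wiener increments over non-overlapping intervals.
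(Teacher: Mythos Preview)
Your proposal is correct and follows essentially the same route as the paper: realize $\mathbb B$ via a Wiener process, localize the Hadamard directional derivative to the affine pieces, use the affine equivariance of the LCM (the paper cites Lemma~2.1 of Durot--Tocquet for this), then change variables and invoke the self-similarity and independent-increments properties to identify the rescaled increments as independent Wiener processes. Your write-up is in fact slightly more careful on points the paper leaves implicit (that $h_k>0$, the open-versus-closed interval issue, and almost-sure summability).
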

Proposition \ref{SnpWeakLim} should be compared to Theorem 3.1 of \citet{BM2015}, which establishes the weak limit of a similar statistic used for testing the concavity of an ODC. The former result may be heuristically obtained by setting \(\lambda=1\) in the latter. Note that if \(\mathbb W\) is a Wiener process and \(\mathbb B\) is the Brownian bridge \(\mathbb B(u)=\mathbb W(u)-u\mathbb W(1)\), then \(\mathcal D\mathbb W=\mathcal D\mathbb B\). This follows from the following property of the LCM, which is part of Lemma 2.1 of \citet{DT2003}: for any two functions \(\theta_1,\theta_2\in\ell^\infty([0,1])\) with \(\theta_2\) affine, we have \(\mathcal M_{[0,1]}(\theta_1+\theta_2)=\mathcal M_{[0,1]}\theta_1+\theta_2\). Thus, the collection of mutually independent Wiener processes in Proposition \ref{SnpWeakLim} could equivalently be replaced with a mutually independent collection of Brownian bridges. The expression involving Wiener processes is more convenient in our proofs in the following section, which rely on the self-similarity and independent increments properties of Wiener processes.

Least favorability of the uniform distribution is established by the following result, which is the main contribution of this note.
\begin{Theorem}\label{mainresult}
	For any concave \(F\in\mathcal F\), the weak limit of \(S_{n,p}\) is first-order stochastically dominated by \(\Vert\mathcal D\mathbb W\Vert_p\), where \(\mathbb W\) is a Wiener process.
\end{Theorem}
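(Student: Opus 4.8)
\noindent\emph{Proof plan.} The plan is to combine the explicit form of the weak limit in Proposition \ref{SnpWeakLim} with a self-similarity argument for the Wiener process. By Proposition \ref{SnpWeakLim}, when \(\mathcal K\) is empty the weak limit is zero and the claim is trivial; otherwise the weak limit equals \(\big(\sum_{k\in K}d_kh_k^{p/2}Z_k\big)^{1/p}\) in distribution, where the \(Z_k=\|\mathcal D\mathbb W_k\|_p^p\) are independent copies of \(Z=\|\mathcal D\mathbb W\|_p^p\). Since \(t\mapsto t^{1/p}\) is nondecreasing, it suffices to show that \(Z\) stochastically dominates \(\sum_{k\in K}d_kh_k^{p/2}Z_k\). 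Two facts will be used throughout. First, because the intervals \((a_k,b_k)\) are pairwise disjoint subintervals of \((0,\bar{x})\subseteq(0,1)\) and \(F\) is nondecreasing, \(\sum_{k\in K}d_k\le 1\) and \(\sum_{k\in K}h_k\le 1\). Second, for any nonempty convex \(J\subseteq[0,1]\) the restriction to \(J\) of a concave majorant of \(\theta\) over \([0,1]\) is a concave majorant of \(\theta|_J\) over \(J\), whence \(\mathcal M_J(\theta|_J)\le\mathcal M_{[0,1]}\theta\) on \(J\); moreover, over any interval the LCM is positively homogeneous, commutes with the addition of an affine function (extending the \citet{DT2003} identity quoted above), and commutes with affine reparametrizations of the domain.

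The heart of the argument is a \emph{concatenation inequality}: for any pairwise disjoint subintervals \(I_k=[s_{k-1},s_k]\subseteq[0,1]\) of lengths \(\lambda_k\), one has \(\|\mathcal D\mathbb W\|_p^p\ge\sum_k\lambda_k^{1+p/2}\|\mathcal D U_k\|_p^p\) almost surely, where \(\{U_k\}\) is a mutually independent collection of standard Wiener processes on \([0,1]\). To prove this, restriction monotonicity gives \(\mathcal D\mathbb W=\mathcal M_{[0,1]}\mathbb W-\mathbb W\ge\mathcal M_{I_k}(\mathbb W|_{I_k})-\mathbb W\ge 0\) on \(I_k\). Writing \(\mathbb W\) on \(I_k\) through its increment process \(V_k(t)=\mathbb W(s_{k-1}+t)-\mathbb W(s_{k-1})\), Brownian scaling gives \(V_k(t)=\sqrt{\lambda_k}\,U_k(t/\lambda_k)\) for a standard Wiener process \(U_k\) on \([0,1]\), and the \(U_k\) are mutually independent since \(\mathbb W\) has independent increments. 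Using homogeneity, affine invariance, and reparametrization invariance of the LCM, \(\mathcal M_{I_k}(\mathbb W|_{I_k})(x)-\mathbb W(x)=\sqrt{\lambda_k}\,\mathcal D U_k\big((x-s_{k-1})/\lambda_k\big)\) on \(I_k\); integrating the \(p\)-th power over \(I_k\) and substituting \(u=(x-s_{k-1})/\lambda_k\) contributes the factor \(\lambda_k^{p/2}\cdot\lambda_k=\lambda_k^{1+p/2}\), and summing over \(k\) yields the inequality.

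It remains to choose the intervals \(I_k\). Take \(\lambda_k=d_k^{2/(2+p)}h_k^{p/(2+p)}\), so that \(\lambda_k^{1+p/2}=d_kh_k^{p/2}\) identically, and by H\"older's inequality with conjugate exponents \((2+p)/2\) and \((2+p)/p\),
\[
\sum_{k\in K}\lambda_k\ \le\ \Big(\sum_{k\in K}d_k\Big)^{2/(2+p)}\Big(\sum_{k\in K}h_k\Big)^{p/(2+p)}\ \le\ 1,
\]
so pairwise disjoint subintervals of \([0,1]\) with these lengths do exist. The concatenation inequality now gives \(\|\mathcal D\mathbb W\|_p^p\ge\sum_{k\in K}d_kh_k^{p/2}\|\mathcal D U_k\|_p^p\) almost surely; since \((\|\mathcal D U_k\|_p^p)_{k\in K}\) has the same joint law as \((Z_k)_{k\in K}\), it follows that \(Z\) stochastically dominates \(\sum_{k\in K}d_kh_k^{p/2}Z_k\), which by Proposition \ref{SnpWeakLim} is the \(p\)-th power of the weak limit of \(S_{n,p}\). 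Taking \(p\)-th roots completes the proof.

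I expect the main obstacle to be the concatenation inequality, specifically tracking the exact exponent \(\lambda_k^{1+p/2}\) through the combination of restriction monotonicity, homogeneity, affine and reparametrization invariance of the LCM, and Brownian scaling, and confirming that the rescaled pieces \(U_k\) are genuinely mutually independent. Once this inequality is in hand, the identity \(\lambda_k^{1+p/2}=d_kh_k^{p/2}\) dictates the choice \(\lambda_k=d_k^{2/(2+p)}h_k^{p/(2+p)}\), and H\"older's inequality (together with \(\sum_{k\in K} d_k\le 1\) and \(\sum_{k\in K} h_k\le 1\)) confirms its feasibility.
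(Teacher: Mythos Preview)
Your proposal is correct and follows essentially the same route as the paper's proof: both define \(\lambda_k=l_k=d_k^{2/(p+2)}h_k^{p/(p+2)}\) so that \(\lambda_k^{(p+2)/2}=d_kh_k^{p/2}\), place disjoint subintervals of these lengths inside \([0,1]\), rescale the Wiener increments on each subinterval to obtain mutually independent standard Wiener processes, and combine restriction monotonicity of the LCM with the change of variables to obtain the almost sure inequality \(\sum_k d_kh_k^{p/2}\|\mathcal D U_k\|_p^p\le\|\mathcal D\mathbb W\|_p^p\). The only noteworthy difference is that you verify \(\sum_k\lambda_k\le1\) via H\"older's inequality, whereas the paper uses the weighted AM--GM inequality \(\lambda_k\le\tfrac{2}{p+2}d_k+\tfrac{p}{p+2}h_k\); both are valid and equally direct.
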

Theorem \ref{mainresult} is valid for any \(p\in[1,\infty]\). This was shown by \citet{KL2004,KL2008} for the case \(p=\infty\), but is, to the best of our knowledge, a new result for \(p\in[1,\infty)\). It is interesting to compare Theorem \ref{mainresult} to Theorems 4.1 and 4.2 of \citet{BM2015}. The latter results show, in a similar context involving testing the concavity of an ODC, that the least favorability of the uniform distribution hinges critically on having \(p\leq2\). Theorem \ref{mainresult} shows that the simpler test of concavity studied here is better behaved, with the uniform distribution least favorable for all \(p\in[1,\infty]\).

For the reader's convenience, in Table \ref{criticalvalues} we report approximate \((1-\alpha)\)-quantiles of \(\Vert\mathcal D\mathbb W\Vert_p\) for \(\alpha=0.01,0.05,0.1\) and \(p=1,2,\infty\). These values were obtained by numerical simulation. They can be used as critical values for tests of the concavity of a distribution function based on the statistic \(S_{n,p}\). The values in the final column of Table \ref{criticalvalues} corresponding to \(p=\infty\) are close to those in Table 1 of \citet{D2003} and in Table 1 of \citet{KL2004}.
\begin{table}
	\caption{Simulated \((1-\alpha)\)-quantiles of \(\Vert\mathcal D\mathbb W\Vert_p\).}
	\centering
	\begin{tabular}{rccc}
		\toprule
		&\(p=1\)	& \(p=2\)	& \(p=\infty\)\\
		\midrule
		\(\alpha=0.01\)& 0.80		& 0.91			& 1.68\\
		\(\alpha=0.05\)& 0.65		& 0.74			& 1.43\\
		\(\alpha=0.10\)& 0.57		& 0.66			& 1.30\\
		\bottomrule
	\end{tabular}
	\label{criticalvalues}
\end{table}

\section{Proofs}

\begin{proof}[Proof of Lemma \ref{HadamardLem}]
	This is Proposition 2.1 of \citet{BF2017}.
\end{proof}

\begin{proof}[Proof of Lemma \ref{FangLem}]
	Lemma \ref{FangLem} can be deduced from Lemma \ref{HadamardLem} by arguing as in the paragraph immediately following the statement of the latter result. It is the same as Lemma 3.1 of \citet{F2019}, but with the additional requirement that \(F(1)=1\), which is not needed.
\end{proof}

\begin{proof}[Proof of Proposition \ref{SnpWeakLim}]
	Suppose first that \(\mathcal K\) is empty. In this case \(F\) is strictly concave on \([0,\bar{x}]\) and affine on \((\bar{x},\infty)\), and we deduce from Lemma \ref{HadamardLem} that
	\begin{equation}
		\mathcal M'_F\mathbb B_F(x)=\begin{cases}\mathbb B_F(x)&\text{if }0\leq x\leq\bar{x}\\
			\mathcal M_{(\bar{x},\infty)}\mathbb B_F(x)&\text{if }x>\bar{x}.\end{cases}
	\end{equation}
	Since \(\mathbb B_F(x)=0\) for all \(x>\bar{x}\), we have \(\mathcal M_{(\bar{x},\infty)}\mathbb B_F(x)=\mathbb B_F(x)=0\) for all \(x>\bar{x}\). We therefore have \(\mathcal M'_F\mathbb B_F(x)=\mathbb B_F(x)\) for all \(x\geq0\), and hence \(\Vert\mathcal M'_F\mathbb B_F-\mathbb B_F\Vert_p=0\), as claimed.
	
	Suppose instead that \(\mathcal K\) is not empty. Let \(\mathbb W\) be a Wiener process such that \(\mathbb B(u)=\mathbb W(u)-u\mathbb W(1)\). For \(k\in K\) and \(u\in[0,1]\), define
	\begin{equation}
		\mathbb W_k(u)=h_k^{-1/2}(\mathbb W(F(a_k)+h_ku)-\mathbb W(F(a_k)).
	\end{equation}
	The self-similarity property of the Wiener process \(\mathbb W\) implies that each \(\mathbb W_k\) is a Wiener process, while the independent increments property of \(\mathbb W\) implies that the collection of Wiener processes \(\{\mathbb W_k,k\in K\}\) is mutually independent. Let \(\mathbb W_F=\mathbb W\circ F\), and note that, for \(k\in K\) and \(u\in[0,1]\), since \(F(a_k+d_ku)=F(a_k)+h_ku\), we have
	\begin{equation}\label{eq:WF}
		\mathbb W_k(u)=h_k^{-1/2}(\mathbb W_F(a_k+d_ku)-\mathbb W_F(a_k)).
	\end{equation}
	Lemma 2.1 of \citet{DT2003} implies that, for \(k\in K\) and \(u\in[0,1]\),
	\begin{equation}\label{eq:DT1}
		\mathcal M_{[0,1]}\mathbb W_k(u)=h_k^{-1/2}([\mathcal M_{[a_k,b_k]}\mathbb W_F](a_k+d_ku)-\mathbb W_F(a_k)),
	\end{equation}
	where we enclose \(\mathcal M_{[a_k,b_k]}\mathbb W_F\) in square brackets to emphasize that we are taking the LCM of \(\mathbb W_F(x)\) as a function of \(x\), not the LCM of \(\mathbb W_F(a_k+d_ku)\) as a function of \(u\). Subtracting \eqref{eq:WF} from \eqref{eq:DT1}, we obtain
	\begin{equation}\label{eq:difference}
		\mathcal D\mathbb W_k(u)=h_k^{-1/2}([\mathcal M_{[a_k,b_k]}\mathbb W_F](a_k+d_ku)-\mathbb W_F(a_k+d_ku)).
	\end{equation}
	Since \(\mathbb W_F-\mathbb B_F\) is affine on each interval \([a_k,b_k]\), another application of Lemma 2.1 of \citet{DT2003} shows that we may rewrite \eqref{eq:difference} as
	\begin{equation}\label{eq:key1}
		\mathcal D\mathbb W_k(u)=h_k^{-1/2}([\mathcal M_{[a_k,b_k]}\mathbb B_F](a_k+d_ku)-\mathbb B_F(a_k+d_ku)).
	\end{equation}
	Raising both sides of \eqref{eq:key1} to the power of \(p\), integrating over \(u\), and applying the change-of-variables \(v=a_k+d_ku\), we obtain
	\begin{align}
		\Vert\mathcal D\mathbb W_k\Vert_p^p&=h_k^{-p/2}\int_0^1([\mathcal M_{[a_k,b_k]}\mathbb B_F](a_k+d_ku)-\mathbb B_F(a_k+d_ku))^p\mathrm{d}u\\
		&=d_k^{-1}h_k^{-p/2}\int_{a_k}^{b_k}(M_{[a_k,b_k]}\mathbb B_F(v)-\mathbb B_F(v))^p\mathrm{d}v.
	\end{align}
	Multiplying by \(d_kh_k^{p/2}\) and summing over \(k\in K\) gives
	\begin{align}
		\sum_{k\in K}d_kh_k^{p/2}\Vert\mathcal D\mathbb W_k\Vert_p^p&=\sum_{k\in K}\int_{a_k}^{b_k}(M_{[a_k,b_k]}\mathbb B_F(v)-\mathbb B_F(v))^p\mathrm{d}v\\&=\int_0^1(\mathcal M'_F\mathbb B_F(v)-\mathbb B_F(v))^p\mathrm{d}v,
	\end{align}
	yielding \eqref{prop1eq} as claimed. In the particular case where \(F\) is the uniform distribution on \([0,1]\), it is apparent that the sole interval in \(\mathcal K\) is the open unit interval. Therefore, setting \(K=\{1\}\), we have \(d_1=1\), \(h_1=1\), and \(\Vert\mathcal M'_F\mathbb B_F-\mathbb B_F\Vert_p=\Vert\mathcal D\mathbb W_1\Vert_p\).
\end{proof}

\begin{proof}[Proof of Theorem \ref{mainresult}]
	If \(\mathcal K\) is empty then the result is immediate from Proposition \ref{SnpWeakLim}, so suppose that \(\mathcal K\) is nonempty. For \(k\in K\), define
	\begin{equation}
		l_k=d_k^{2/(p+2)}h_k^{p/(p+2)}.
	\end{equation}
	Let \(\mathbb W\) be a Wiener process. In view of Proposition \ref{SnpWeakLim}, it suffices for us to find \(\{\mathbb W_k,k\in K\}\), a mutually independent collection of Wiener processes, such that
	\begin{equation}\label{objective}
		\left(\sum_{k\in K}l_k^{(p+2)/2}\Vert\mathcal D\mathbb W_k\Vert_p^p\right)^{1/p}\leq\Vert\mathcal D\mathbb W\Vert_p.
	\end{equation}
	The well-known inequality between weighted geometric and arithmetic means implies that
	\begin{equation}
		l_k\leq\frac{2}{p+2}d_k+\frac{p}{p+2}h_k.
	\end{equation}
	Consequently, since \(\sum_{k}d_k\leq1\) and \(\sum_{k}h_k\leq1\) (because \(F(1)=1\)), we have \(\sum_kl_k\leq1\). We thus deduce the existence of a disjoint collection of open intervals \((a_k^\ast,b_k^\ast)\subset[0,1]\), \(k\in K\), such that \(b_k^\ast-a_k^\ast=l_k\) for each \(k\in K\). For \(k\in K\) and \(u\in[0,1]\), define
	\begin{equation}\label{T1eq1}
		\mathbb W_k(u)=l_k^{-1/2}(\mathbb W(a_k^\ast+l_ku)-\mathbb W(a_k^\ast)).
	\end{equation}
	The self-similarity property of the Wiener process \(\mathbb W\) implies that each \(\mathbb W_k\) is a Wiener process, while the independent increments property of \(\mathbb W\) implies that the collection of Wiener processes \(\{\mathbb W_k,k\in K\}\) is mutually independent. Lemma 2.1 of \citet{DT2003} implies that, for \(k\in K\) and \(u\in[0,1]\),
	\begin{equation}\label{T1eq2}
		\mathcal M_{[0,1]}\mathbb W_k(u)=l_k^{-1/2}([\mathcal M_{[a_k^\ast,b_k^\ast]}\mathbb W](a_k^\ast+l_ku)-\mathbb W(a_k^\ast)),
	\end{equation}
	and so, by subtracting \eqref{T1eq1} from \eqref{T1eq2}, we obtain
	\begin{equation}\label{T1eq3}
		\mathcal D\mathbb W_k(u)=l_k^{-1/2}([\mathcal M_{[a_k^\ast,b_k^\ast]}\mathbb W](a_k^\ast+l_ku)-\mathbb W(a_k^\ast+l_ku)).
	\end{equation}
	Raising both sides of \eqref{T1eq3} to the power of \(p\), integrating over \(u\), and applying the change-of-variables \(v=a_k^\ast+l_ku\), we obtain
	\begin{align}
		\Vert\mathcal D\mathbb W_k\Vert_p^p&=l_k^{-p/2}\int_0^1([\mathcal M_{[a_k^\ast,b_k^\ast]}\mathbb W](a_k^\ast+l_ku)-\mathbb W(a_k^\ast+l_ku))^p\mathrm{d}u\\
		&=l_k^{-(p+2)/2}\int_{a_k^\ast}^{b_k^\ast}(\mathcal M_{[a_k^\ast,b_k^\ast]}\mathbb W(v)-\mathbb W(v))^p\mathrm{d}v.
	\end{align}
	Multiplying by \(l_k^{(p+2)/2}\) and summing over \(k\in K\) gives
	\begin{align}
		\sum_{k\in K}l_k^{(p+2)/2}\Vert\mathcal D\mathbb W_k\Vert_p^p&=\sum_{k\in K}\int_{a_k^\ast}^{b_k^\ast}(\mathcal M_{[a_k^\ast,b_k^\ast]}\mathbb W(v)-\mathbb W(v))^p\mathrm{d}v\\&\leq\sum_{k\in K}\int_{a_k^\ast}^{b_k^\ast}\mathcal D\mathbb W(v)^p\mathrm{d}v\leq\int_0^1\mathcal D\mathbb W(v)^p\mathrm{d}v,
	\end{align}
	and \eqref{objective} follows.
\end{proof}

\end{document}